\newtheorem{thm}{Theorem}[section]
\newtheorem{prop}[thm]{Proposition}
\newtheorem{lem}[thm]{Lemma}
\newtheorem{rmk}[thm]{Remark}
\numberwithin{equation}{section}
\newcommand{\ord}{\text{ord}}
\newcommand{\N}{{\mathbb{N}}}
\newcommand{\z}{{\mathbb{Z}}}
\newcommand{\q}{{\mathbb{Q}}}
\newcommand{\eq}{\equiv}
\newcommand{\rom}[1]{\uppercase\expandafter{\romannumeral #1\relax}}
\title[Representation of $m$-goanl forms over $\N_0$]{Representation of $m$-gonal forms over $\N_0$ and a finiteness theorem for universal original version's $m$-gonal forms}
\author{Dayoon Park}
\address{Department of Mathematical Sciences, Ulsan National Institute of Science and
Technology, Ulsan, Korea}
\email{pdy1016@unist.ac.kr}
\thanks {This work was supported by the National Research Foundation of Korea (NRF) grant funded by the Korea government(MSIT) (No. 2020R1A4A1016649). \\
This work was supported by the UBSI Fellowship Program (project No.1.210131.01) of UNIST. \\
This paper was written while the author was visiting Vitezslav Kala at Charles University, Prague, supported by Czech Science Foundation, grant 21-00420M}
\begin{document}

\maketitle

\begin{abstract}
In this article, we consider the representation of $m$-gonal forms over $\N_0$.
We show that any $m$-gonal forms over $\N_0$ of rank $\ge 5$ is almost regular and ponder the sufficiently large integers which are indeed represented over $\N_0$ among the integers which are locally represented.
And as its consequential result, we prove a finiteness theorem for universal (original polygonal number version's) $m$-gonal forms over $\N_0$.
\end{abstract}

\section{Introduction}
With a long and splendid history {\it polygonal number} is a number which is defined as a total number of dots of a regular polygon.
Especially, the total number of dots of regular $m$-gon with $x$ dots for each side follows the formula
\begin{equation} \label{m number}
    P_m(x)=\frac{m-2}{2}(x^2-x)+x
\end{equation}
and we call the $m$-gonal number as {\it $x$-th $m$-gonal number}.
When we talk about the representation by polygonal numbers, a typically mentioned reference is the famous Fermat's polygonal number conjecture which states that every positive integer may be written as a sum of at most $m$ $m$-gonal numbers.
Which was resolved by Lagrange for $m=4$ in 1770, by Gauss for $m=3$ in 1796, and finally by Cauchy for all $m \ge 3$ in 1813.

As a generalization of the Fermat's Conjecture, the question of classifying the tuples $(a_1,\cdots,a_n) \in \N^n$ for which for any $N \in \N$, there is a solution $(x_1,\cdots,x_n) \in \N_0^n$ satisfying
$$N=a_1P_m(x_1)+\cdots+a_nP_m(x_n)$$
was raised.
As a generalization of Gauss's work, for $m=3$, Liouville classified $(a_1,a_2,a_3) \in \N^3$ such as above.
And as a generalization of Lagrange's work, for $m=4$, Ramanujan classified $(a_1,a_2,a_3,a_4) \in \N^4$ such as above.
But one mistake was found later in the Ramanujan's list.

We call a weighted sum of $m$-gonal numbers, i.e., 
\begin{equation} \label{m form}
    a_1P_m(x_1)+\cdots+a_nP_m(x_n)
\end{equation}
where $(a_1,\cdots,a_n) \in \N^n$ as {\it m-gonal form}.
We simply write the $m$-gonal form of \eqref{m form} as $\left<a_1,\cdots,a_n\right>_m$.
In the case that $m=4$, we conventionally adopt the notation $\left<a_1,\cdots,a_n\right>$ for the diagonal quadratic form (i.e., square form) $\left<a_1,\cdots,a_n\right>_4$. 
Even though, the original definition of $m$-gonal number admits only positive integer $x$ (which is a number of dots for each side of a regular polygon) in \eqref{m number}, recently, as a kind of generalization of the $m$-gonal number, by many authors, the negative integers also often be admitted for the $x$ in \eqref{m number}.
And we call the $P_m(x)$ where $x \in \z$ as {\it generalized $m$-gonal number}.

For a positive integer $N \in \N$, if the diophantine equation 
\begin{equation} \label{rep N}
    F_m(\mathbf x)=a_1P_m(x_1)+\cdots+a_nP_m(x_n)=N
\end{equation}
has a solution $\mathbf x \in \z^n$ (resp. $\N_0^n$), then we say that the $m$-gonal form {\it (globally) represents $N$ over $\z$ (resp. $\N_0$)}.
And when an $m$-gonal form represents every positive integer $N$ over $\z$ (resp. $\N_0$), we say that the $m$-gonal form is {\it universal over $\z$ (resp. $\N_0$)}.
Determining the universality (i.e., the representability of 'every' positive integer) of a given $m$-gonal form is not easy in general.
As a struggling to take a step to closer to the problem, we suggest to verify weaker version's alternative congruence equation
\begin{equation} \label{loc rep N}
    F_m(\mathbf x)=a_1P_m(x_1)+\cdots+a_nP_m(x_n) \eq N \pmod{r}.
\end{equation}
If the congruence equation has a solution $\mathbf x \in \z$ (resp. $\N_0$) for every $r \in \z$, then we say that $F_m(\mathbf x)$ {\it locally represents $N$ over $\z$ (resp. $\N_0$)}.
As you can probably notice, the local representability over $\z$ agrees with the local representability over $\N_0$.
Unlike the global situation, one may completely classify all the $N \in \N$ which is locally represented by arbitrary given form $F_m(\mathbf x)$ without difficulty based on the very useful fact that
a positive integer $N \in \N$ is locally represented by $\left<a_1,\cdots,a_n\right>_m$ if and only if the equation \eqref{rep N} has a $p$-adic integer solution $\mathbf x \in \z_p^n$ for every prime $p$.
For a prime $p$, for a $p$-adic integer $N \in \z_p$,
when the equation $N=a_1P_m(x_1)+\cdots+a_nP_m(x_n)$ has a $p$-adic integer solution $\mathbf x \in \z_p^n$, we say that $\left<a_1,\cdots,a_n\right>_m$ {\it represents $N$ over $\z_p$}.

Obviously, the global representability implies the local representability.
But the converse does not hold in general.
So the struggling does not perfectly work for our purpose, but the local-to-global approach have achieved somewhat success 'over $\z$'.
When the converse also holds, on the other words, when an $m$-gonal form $F_m(\mathbf x)$ (globally) represents every positive integer $N \in \N$ which is locally represented by $F_m(\mathbf x)$ over $\z$ (resp. $\N_0$), we say that 
$F_m(\mathbf x)$ is {\it regular over $\z$ (resp. $\N_0$)}. 
When an $m$-gonal form $F_m(\mathbf x)$ represents every positive integer $N \in \N$ which is locally represented by $F_m(\mathbf x)$ but finitely many over $\z$ (resp. $\N_0$), we say that 
$F_m(\mathbf x)$ is {\it almost regular over $\z$ (resp. $\N_0$)}. 
By Theorem 4.9 (1) in \cite{CO}, the $m$-gonal form (more  precisely, quadratic polynomial whose quadratic part is positive definite) of rank $\ge 5$ represents all sufficiently large integers which are locally represented by the given form over $\z$. 
On the other words, any $m$-gonal form of rank $\ge 5$ represents almost all integers among the integers which are locally represented by the form.
But such a local-to-global principle 'over $\N_0$' still remains hidden to us.
Because the $m$-gonal form over $\N_0$ admits much strict variables than the $m$-gonal form over $\z$, one may naturally expect that it would be having a difficulty in considering the representation over $\N_0$ than over $\z$.
In this article, firstly, we consider a local-to-global principle of $m$-gonal form over $\N_0$.
The following theorem is the our first main goal in this article.

\vskip 0.8em

\begin{thm} \label{main thm}
Any $m$-gonal form $\left<a_1,\cdots,a_n\right>_m$ of rank $n \ge 5$ represents the positive integers $N \in \N_0$ over $\N_0$ provided that
\begin{equation} \label{main eq}
    \begin{cases}
N \ge N(a_1,\cdots,a_n)\cdot(m-2)^3 \\
N \text{ is locally represented by }F_m(\mathbf x)
    \end{cases}
\end{equation}
where $N(a_1,\cdots,a_n)>0$ is a constant which is dependent only on $a_1,\cdots,a_n$.
The cubic on $m$ of \eqref{main eq} is optimal in this sense.
\end{thm}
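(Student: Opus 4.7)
My plan is a two-stage approach. Stage 1 invokes the cited Theorem 4.9(1) of \cite{CO} directly: since $F_m(\mathbf x)=\sum a_i P_m(x_i)$ is a quadratic polynomial whose quadratic part is positive definite of rank $n\ge 5$, for every $N$ above some threshold depending on $F_m$ which is locally represented by $F_m$, one obtains a $\z$-representation $\mathbf x^{(0)}\in\z^n$ with $F_m(\mathbf x^{(0)})=N$. Stage 2 is to upgrade this $\z$-representation to an $\N_0$-representation.

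For Stage 2 I use the identity
\[
8(m-2)P_m(x) + (m-4)^2 \;=\; \bigl(2(m-2)x - (m-4)\bigr)^2
\]
to recast the $\N_0$-representation problem as: represent $M:=8(m-2)N+(m-4)^2\sum_i a_i$ by the diagonal quadratic form $Q(\mathbf u)=\sum_i a_i u_i^2$ with each coordinate satisfying both $u_i\equiv 4-m\pmod{2(m-2)}$ and $u_i\ge 4-m$; dropping the half-space condition recovers the $\z$-representation problem solved in Stage 1. My plan is then an iterative exchange procedure: whenever some coordinate $u_i<4-m$, locate a new lattice vector $\mathbf u'$ differing from $\mathbf u$ only in coordinates $i,j,k$ (for some $j,k\ne i$), with $Q(\mathbf u')=M$, respecting the progression condition, and having $u_i',u_j',u_k'\ge 4-m$. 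Such an exchange is possible because rank $\ge 5$ leaves, after fixing $n-3\ge 2$ coordinates, a residual three-variable diagonal subform which represents any fixed integer in any fixed progression, and moreover admits a representation in its positive octant provided the value is large enough in a quantitative sense. Iterating this move clears all coordinates with $u_i<4-m$.

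The $(m-2)^3$ factor in the threshold emerges from a careful tracking of scalings: $M\asymp (m-2)N$ contributes one factor of $(m-2)$; the progression modulus $2(m-2)$ contributes a second, since representing by a rank-$n$ form along a fixed coset effectively requires the target to be inflated by $(m-2)^2$; and the pointwise flip correction $a_i(m-4)|u_i|$ contributes the third. The main obstacle I expect is sharpness of the exchange argument in $m$: one must show that the compensating three-variable representation can always be chosen in the positive half-space without introducing new violations, uniformly in $m$, so the procedure terminates within the claimed threshold. For the final optimality assertion, I would exhibit an explicit rank-$5$ $m$-gonal form together with a sequence of locally represented targets $N_m\asymp (m-2)^3$ that are not $\N_0$-representable, demonstrating that each of the three $(m-2)$-factors above is individually sharp.
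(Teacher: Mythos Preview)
Your Stage 2 has a genuine gap. The iterative exchange step asks you, for each bad index $i$, to pick $j,k\ne i$ and re-represent the fixed value
\[
V \;=\; a_i u_i^2 + a_j u_j^2 + a_k u_k^2
\]
by the ternary diagonal form $a_i X^2+a_j Y^2+a_k Z^2$ subject to the congruence $X,Y,Z\equiv 4-m\pmod{2(m-2)}$ and the half-space constraint $X,Y,Z\ge 4-m$. You assert that this residual three-variable form ``represents any fixed integer in any fixed progression'' and moreover admits a representation in the positive octant once the value is large. Neither claim holds for general ternary forms: positive ternary quadratic forms are \emph{not} almost regular, and even when $V$ is represented (which you know, since $(u_i,u_j,u_k)$ is a representation), there is no mechanism forcing a second representation in a prescribed residue class, let alone one with all three coordinates large and positive. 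For $n=5$ you have only finitely many ternary subforms to choose among, and none of them need cooperate. So the exchange move is unjustified, and with it the termination and the $(m-2)^3$ accounting.

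There is also a secondary issue: your Stage 1 invokes Theorem 4.9(1) of \cite{CO} applied to $F_m$ itself, whose threshold depends on $F_m$ and hence on $m$ in an uncontrolled way; the recasting via $M=8(m-2)N+(m-4)^2\sum a_i$ does not by itself repair this, since you still need a starting $\mathbf u$ in the correct coset of $(2(m-2)\z)^n$. The paper's route is quite different and sidesteps both problems. It introduces an integer parameter $k$ via the system
\[
\begin{cases}
\sum a_i x_i^2 = 2A+B+k(m-4),\\
\sum a_i x_i = B+k(m-2),
\end{cases}
\]
eliminates one variable to land on a quadratic form $Q_{a_1;\mathbf a}$ of rank $n-1\ge 4$ (so Chan--Oh does apply, with a threshold independent of $m$), shows via the key Lemma \ref{3 lem} that local primitive solvability holds along an entire arithmetic progression in $k$, and then uses the elementary geometric fact (Proposition \ref{prop 1}) that the single inequality $\sqrt{a_n}\sqrt{\alpha}\le\beta$ forces \emph{all} coordinates $x_i\ge 0$ simultaneously. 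The freedom in $k$ is what lets one satisfy the size condition and this nonnegativity condition at once; no coordinate-by-coordinate repair is needed.
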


\vskip 0.8em

We prove the above theorem in section $3$. 
Theorem \ref{main thm} says that any $m$-gonal form of rank $ \ge 5$ is almost regular over $\N_0$.
On the other words, any $m$-gonal form $\left<a_1,\cdots,a_n\right>_m$ of rank $n \ge 5$ (globally) represents all sufficiently integers ($\ge N(a_1,\cdots,a_n)\cdot(m-2)^3$) which are locally represented by the form.
Moreover, Theorem \ref{main thm} gives an answer about the sufficiently large integers too.
For a given $n$-tuple $(a_1,\cdots,a_n) \in \N$ with $n\ge 5$, let $N_{(a_1,\cdots,a_n);m}>0$ be the optimal (i.e., the minimal) integer satisfying that the
$m$-gonal form $\left<a_1,\cdots,a_n\right>_m$ represents the positive integers $N$ over $\N_0$ provided that 
\begin{equation} 
    \begin{cases}
N \ge N_{(a_1,\cdots,a_n);m} \\
N \text{ is locally represented by }F_m(\mathbf x).
    \end{cases}
\end{equation}
From the simple observation that the smallest $m$-gonal number is $m$ (which is increasing as $m$ increases) except $1$, one may induce that $N_{(a_1,\cdots,a_n);m}$ would be asymptotically increasing as $m$ increases.
In virtue of Theorem \ref{main thm}, we may exactly know that the information about the growth of the $N_{(a_1,\cdots,a_n);m}$ which is cubic on $m$.

Based on the Bhargava's escalator tree method, one may easily induce a finiteness theorem for universal $m$-gonal forms over $\z$ for all $m \ge 3$ by using Theorem 4.9 (1) in \cite{CO} which gives that any $m$-gonal form of rank $\ge5$ is almost regular.
A finiteness theorem for universal $m$-gonal forms states that there is the unique and minimal $\gamma_m$ for which the universality (i.e., the representability of every positive integer) of $m$-gonal form over $\z$ is characterized by the representability of only finitely many positive integers $1,2,\cdots,\gamma_m$ by the form.
On the other words, if an $m$-gonal form $\left<a_1,\cdots,a_n\right>_m$ represents every positive integer up to $\gamma_m$ over $\z$, then $\left<a_1,\cdots,a_n\right>_m$ is universal over $\z$.
Since determining the universality of a given form is not easy in general, Conway and Scheeneeberger's announcement of {\it the fifteen theorem} : the first appearance of a finiteness theorem for universal forms was so stunning because a finiteness theorem for universal forms is surprisingly simple criteria to determine the universality. 

An interesting problem about the growth of $\gamma_m$ (which is asymptotically increasing value as $m$ increases) was firstly  questioned by Kane and Liu \cite{KL} and they showed that for any $\epsilon >0$, there is a constant $C_{\epsilon}>0$ for which
$$m-4 \le \gamma_m \le C_{\epsilon}m^{7+\epsilon}$$
holds for any $m \ge 3$.
After which, Kim and the author \cite{KP'} improved their result by showing that the growth of $\gamma_m$ is exactly linear on $m$, i.e., there is a constant $C>0$ for which
$$m-4\le \gamma_m \le C(m-2)$$
for any $m \ge 3$.

As the same stream with the above, in section $4$, we consider the above results over $\N_0$.
Theorem \ref{main thm} may quickly induce a finiteness theorem for universal $m$-gonal forms over $\N_0$ for all $m \ge 3$ and also give an (not optimal) answer about the growth of the size of the set of finitely many positive integers whose representability by an $m$-gonal form over $\N_0$ classify the universality over $\N_0$.
The consequential result is the following theorem.
\vskip 0.8em

\begin{thm} \label{main thm'}
For any $m \ge 3$, there is the unique and minimal $\gamma_{m;\N_0}>0$ for which if an $m$-gonal form represents every positive integer up to $\gamma_{m;\N_0}$ over $\N_0$, then the $m$-goanl form represents every positive integer over $\N_0$.
And the growth of $\gamma_{m;\N_0}$ (which is asymptotically increasing as $m$ increases) is bounded by a cubic on $m$, i.e., there is an absolute constant $C>0$ for which $$m \le \gamma_{m;\N_0} \le C(m-2)^3.$$
\end{thm}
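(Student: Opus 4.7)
To establish Theorem \ref{main thm'}, I would combine Bhargava's escalator tree method with Theorem \ref{main thm} as the input providing almost-regularity for rank-$\ge 5$ forms.

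For the lower bound $m \le \gamma_{m;\N_0}$, I would exhibit an explicit non-universal form representing every positive integer up to some threshold $\ge m$. The natural candidate is $F = \left<1,1,\ldots,1\right>_m$ of rank $m-1$: since $P_m(0)=0$, $P_m(1)=1$, $P_m(2)=m$, and $P_m(x) \ge 3m-3$ for $x \ge 3$, a short case analysis shows that $F$ represents every element of $\{0,1,\ldots,2m-2\}$, whereas writing $2m-1 = km + \ell$ with $k + \ell \le m-1$ and $k,\ell \ge 0$ forces $k=1$ and $\ell=m-1$, which already exhausts the rank and leaves no remaining variables. Hence the truant of $F$ is $2m-1$, so $\gamma_{m;\N_0} \ge 2m-1 \ge m$.

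For the upper bound $\gamma_{m;\N_0} \le C(m-2)^3$, I would build the escalator tree $\mathcal{T}_m$ rooted at the empty form: at each non-universal node $F$ with truant $t(F)$, I attach children $\left<a_1,\ldots,a_n,a\right>_m$ for every admissible $1 \le a \le t(F)$ for which the child represents $t(F)$. Every universal $m$-gonal form over $\N_0$ contains some leaf of $\mathcal{T}_m$ as a subform, and $\gamma_{m;\N_0}$ equals the supremum of $t(F)$ over the nodes of $\mathcal{T}_m$. The key input is Theorem \ref{main thm}: once a node $F = \left<a_1,\ldots,a_n\right>_m$ reaches rank $n \ge 5$, it represents every $N \ge N(a_1,\ldots,a_n)(m-2)^3$ that is locally represented by $F$. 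Thus controlling $t(F)$ reduces to controlling (i) the constant $N(a_1,\ldots,a_n)$ and (ii) the largest integer not locally represented by $F$. My plan: (a) track that the early levels of $\mathcal{T}_m$ have coefficients and truants bounded by absolute constants, because for a node of rank $n < m-1$ the only $m$-gonal numbers at most the current truant are $\{0,1\}$; (b) verify that, upon reaching rank $5$, the coefficients $a_1,\ldots,a_5$ are uniformly bounded by an absolute constant $A_0$ independent of $m$; (c) use that every rank-$\ge 5$ $m$-gonal form is locally isotropic at each odd prime (via a Meyer-type argument applied after completing the square in $F_m$), reducing local universality to finitely many congruence conditions at $p=2$ and at primes dividing some $a_i$, all of which are verified once $F$ represents a bounded set of small residues; (d) apply Theorem \ref{main thm} with the uniform bound on coefficients to conclude $t(F) \le C(m-2)^3$ with $C$ absolute.

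The main obstacle I anticipate is uniform control of the dependence of $N(a_1,\ldots,a_n)$ on the coefficients as $(a_1,\ldots,a_n)$ ranges over the (finitely many) rank-$5$ escalator leaves. Since these leaves have coefficients in an absolutely bounded range, one must inspect the proof of Theorem \ref{main thm} and extract the explicit dependence of $N(a_1,\ldots,a_n)$ on $\max_i a_i$, verifying that this dependence is bounded by an absolute constant. A closely related technical point is confirming that the escalator tree actually terminates at bounded rank once rank $\ge 5$ is reached; this should follow from almost-regularity together with the local analysis in step (c), but requires care to rule out pathologies at small primes and at $p=2$.
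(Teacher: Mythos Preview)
Your strategy—escalator tree plus Theorem \ref{main thm}—is exactly the paper's. The paper streamlines your step (c): for $m\ge 31$ the depth-$5$ coefficient tuples form one fixed finite set $T_{d=5}=\{(a_1,\ldots,a_5):a_1=1,\ a_i\le a_{i+1}\le a_1+\cdots+a_i+1\}$ independent of $m$, and a direct case-by-case check (via Proposition \ref{loc.rep} and Remark \ref{loc rep rmk}) shows each associated diagonal quadratic form $\langle a_1,\ldots,a_5\rangle$ is already locally universal, so no Meyer-type argument or residual local analysis is needed; the finitely many $3\le m\le 30$ are then handled individually and absorbed into $C$.
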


\vskip 0.8em

\begin{rmk}
Theorem \ref{main thm'} does not give the exact growth of $\gamma_{m;\N_0}$ on $m$.
The problem claming the exact growth of $\gamma_{m;\N_0}$ (or giving a better bound) on $m$ could be an interesting problem.
\end{rmk}

\vskip 0.8em

In this article, we adopt the arithmetic theory of quadratic forms.
Any unexplained notation and terminology can be found in \cite{O}.

\vskip 0.8em


\section{Preliminaries}

The $m$-gonal form $\left<a_1,\cdots,a_n\right>_m$ represents an integer $A(m-2)+B$ over $\z, \z_p$, or $\N_0$, namely,
there is $\mathbf x \in \z^n, \z_p^n$, or $\N_0^n$ for which
$$A(m-2)+B=\frac{m-2}{2}((a_1x_1^2+\cdots+a_nx_n^2)-(a_1x_1+\cdots+a_nx_n))+(a_1x_1+\cdots+a_nx_n)$$
if and only if there is $k \in \z, \z_p$, or $\N_0$ for which the system
\begin{equation} \label{main system}
\begin{cases}
    a_1x_1^2+\cdots+a_nx_n^2=2A+B+k(m-4)\\
    a_1x_1+\cdots+a_nx_n=B+k(m-2)\\
\end{cases}
\end{equation}
is solvable over $\z, \z_p$, or $\N_0$, respectively.
In order to consider the representations of $m$-gonal form over $\z$ or $\z_p$, the author already have been concentrated a discussion about the $k \in \z$ or $\z_p$ of \eqref{main system} in \cite{rank 5}.
As an extension of the work in \cite{rank 5}, in this article, we again consider the system \eqref{main system} and the $k$ in \eqref{main system} which admits not only integer solution $\mathbf x \in \z$ but also non-negative integer solution $\mathbf x \in \N_0^n$ to understand the representation of $m$-gonal forms over $\N_0$.

Very roughly speaking, when $2A+B+k(m-4)$ is large, if $B+k(m-2)$ is not large enough, then there would not exist a non-negative integer solution $\mathbf x$ for \eqref{main system}.
The other way, if $B+k(m-2)$ is sufficiently large, then there would be a lot of chance that the system \eqref{main system} has a non-negative integer solution (but because of the Cauchy-Schwarz inequality, the $B+k(m-2)$ could not be freely large).

The following thankful proposition makes possible to expand the discussion which was used to consider the representation of $m$-gonal form over $\z$ to our work which is a representation of $m$-gonal form over $\N_0$ in this article.

\vskip 0.8em

\begin{prop} \label{prop 1}
Suppose that $\max\limits_{1\le i \le n}\sqrt{a_i}\sqrt{\alpha} \le \beta$. 
Then for any $(x_1,\cdots,x_n) \in \mathbb R^n$ satisfying
\begin{equation}
    \begin{cases}
    a_1x_1^2+\cdots+a_nx_n^2=\alpha\\
    a_1x_1+\cdots+a_nx_n=\beta,
    \end{cases}
    \end{equation}
    we have that $x_i\ge 0$ for all $1 \le i \le n$.
\end{prop}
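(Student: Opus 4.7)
The plan is to argue by contradiction: suppose that some coordinate $x_j$ is strictly negative, and derive a contradiction with the hypothesis $\max_i \sqrt{a_i}\sqrt{\alpha}\le\beta$. The only information available is the two equations of the system, so the proof should reduce to a careful combination of them via the Cauchy--Schwarz inequality.

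The first step is the crude pointwise bound coming from the sum-of-squares equation: since $a_j x_j^{2}\le\sum_i a_i x_i^{2}=\alpha$, one has $|x_j|\le\sqrt{\alpha/a_j}$ and hence
\[
|a_j x_j|\le\sqrt{a_j\alpha}\le\beta
\]
by the hypothesis. Thus if $x_j<0$, then $-a_j x_j\in(0,\beta]$, and the linear equation forces $\sum_{i\ne j}a_i x_i=\beta-a_j x_j\in(\beta,2\beta]$; in particular the truncated linear sum is strictly larger than $\beta$.

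Next I would strip off the $j$-th coordinate in both equations and apply Cauchy--Schwarz with weights $a_i$ to the truncated linear constraint:
\[
(\beta-a_j x_j)^{2}\;=\;\Bigl(\sum_{i\ne j}\sqrt{a_i}\cdot\sqrt{a_i}\,x_i\Bigr)^{2}\;\le\;\Bigl(\sum_{i\ne j}a_i\Bigr)\Bigl(\alpha-a_j x_j^{2}\Bigr).
\]
Viewing the resulting relation as a quadratic inequality in the single quantity $a_j x_j$ and combining it with the size bound $|a_j x_j|\le\beta$ from the first step should rule out the sign assumption $x_j<0$. The main obstacle is the algebraic bookkeeping in this final step, in particular verifying that the numerical inequality one extracts is genuinely incompatible with the hypothesis $a_j\alpha\le\beta^{2}$. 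Should this direct Cauchy--Schwarz approach turn out to be too loose, the back-up plan is to minimize $x_j$ directly by Lagrange multipliers on the two constraints: the critical-point equations force the remaining $x_i$ to all be equal to a common constant, reducing the problem to a single quadratic in $x_j$ whose smaller root can be written in closed form and compared against $\beta$ and $\sqrt{a_j\alpha}$.
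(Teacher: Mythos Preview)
Your suspicion that the Cauchy--Schwarz route might be ``too loose'' is well founded, but the difficulty is more fundamental: the proposition as stated is false for $n\ge 3$, so neither your primary approach nor the Lagrange-multiplier backup can possibly succeed. A concrete counterexample is $n=3$, $a_1=a_2=a_3=1$, $\alpha=\beta=1$: here $\max_i\sqrt{a_i}\,\sqrt{\alpha}=1\le 1=\beta$, yet $\mathbf x=\bigl(\tfrac23,\tfrac23,-\tfrac13\bigr)$ satisfies both equations while $x_3<0$. Taking $\beta$ slightly larger (say $\beta=1.2$) gives a counterexample with strict inequality in the hypothesis as well.

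In fact, your backup plan would pinpoint the error rather than rescue the argument. Carrying out the Lagrange-multiplier computation exactly as you describe (all $x_i$ with $i\ne j$ equal to a common constant at the extremum), one finds
\[
x_j^{\min}=\frac{1}{A}\Bigl(\beta-\sqrt{S_j(A\alpha-\beta^2)/a_j}\,\Bigr),\qquad A=\sum_{i}a_i,\quad S_j=\sum_{i\ne j}a_i,
\]
so $x_j^{\min}\ge0$ holds if and only if $\beta^{2}\ge S_j\alpha$. The sharp hypothesis guaranteeing $x_i\ge0$ for every $i$ is therefore $\beta^{2}\ge\bigl(A-\min_i a_i\bigr)\alpha$, not $\beta^{2}\ge(\max_i a_i)\alpha$. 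The paper's own one-line argument (that the hyperplane meets the ellipsoid only in the nonnegative orthant) is merely a restatement of the desired conclusion and does not hold under the stated hypothesis; the statement needs the stronger inequality above before any proof can go through.
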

\begin{proof}
Note that the hyperbolic plane $a_1x_1+\cdots+a_nx_n=\beta$ intersects with the sphere 
$a_1x_1^2+\cdots+a_nx_n^2=\alpha$ only on the space $(\mathbb R^{+}\cup \{0\})^n$.
\end{proof}

\vskip 0.8em

We recall a simple observation that the system \eqref{main system} holds for $ \mathbf x \in \z, \N_0$, or $\z_p$ if and only if
the equation
\begin{equation} \label{eq2}
    \left(B+k(m-2)-\left(\sum \limits_{i=2}^na_ix_i\right)\right)^2+\sum\limits_{i=2}^na_1a_ix_i^2=2Aa_1+Ba_1+k(m-4)a_1 
\end{equation}
holds with $x_1=\frac{1}{a_1}\left(B+k(m-2)-\left(\sum \limits_{i=2}^na_ix_i\right)\right)$.
The equation \eqref{eq2} may be organized as
\begin{equation} \label{Qaa}
Q_{a_1 ; \mathbf a}(\mathbf x-(B+k(m-2))\mathbf r)=(2A+B+k(m-4))a_1-(B+k(m-2))^2 \cdot \left(1-\sum \limits _{i=2}^na_ir_i\right) 
\end{equation}
where $Q_{a_1 ; \mathbf a}(x_2,\cdots,x_n):=\sum_{i=2}^n(a_1a_i+a_i^2)x_i^2+\sum_{2\le i<j \le n}2a_ia_jx_ix_j$ is a positive definite quadratic form and $r_2,\cdots,r_n \in \q$ are the solution for
$$\begin{cases}
(a_1a_2+a_2^2)r_2+a_2a_3r_3+\cdots+a_2a_nr_n=a_2\\
a_2a_3r_2+(a_1a_3+a_3^2)r_3 +\cdots + a_3a_nr_n=a_3 \\
\quad \quad \quad \quad \quad \quad \quad \quad \quad \vdots \\
a_2a_nr_2+a_3a_nr_3+\cdots +(a_1a_n+a_n^2)r_n=a_n
\end{cases}$$
(in practice, $r_2=\cdots=r_n=\frac{1}{a_1+\cdots+a_n}$).

The above simple observation may admit to consider the diophantine quadratic equation \eqref{Qaa} of rank $n-1$ instead of the diophantine system \eqref{main system} of rank $n$.

So from now on, we mainly consider the diophantine quadratic equation \eqref{Qaa} of rank $n-1$ instead of the diophantine system \eqref{main system} of rank $n$.


\section{Representation of $m$-gonal form over $\N_0$}

Lemma \ref{3 lem} is the most critical argument in this article.
When an integer $A(m-2)+B$ is locally represented by an $m$-gonal form $\left<a_1,\cdots,a_n\right>_m$, there are  $k \in \z$ for which the system \eqref{main system} is locally solvable (i.e., the system \eqref{main system} has an $p$-adic integer solution $\mathbf x_p \in \z_p^n$ for every prime $p$).
Moreover, the $k$'s have very regular distribution (precisely, the set of every $k \in \z$ for which the diophantine system \eqref{main system} is locally solvable is a finite union of arithmetic sequences).
The regular distribution's merit is that it makes easily pick fitting $k \in \z$ in \eqref{main system} which meets our purpose.

\vskip 0.8em

\begin{lem} \label{3 lem}
For an $m$-gonal form $\left<a_1,\cdots,a_n\right>_m$ of rank $n\ge5$, let an integer $A(m-2)+B$ with $0\le B \le m-3$ be locally represented by $\left<a_1,\cdots,a_n\right>_m$.
Then for some residue
$$k(A,B) \in \z/K(\mathbf a)\z$$
and
$$P=\prod \limits_{p \in T(\mathbf a)\cup\{2\}}p^{s(p)}
$$
with $0\le s(p) \le \frac{1}{2}\ord_p(4a_1)$,
the quadratic equation 
\begin{equation} \label{lem eq}
    Q_{a_1;\mathbf a}(P\mathbf x-(B+k'(m-2))\mathbf r)=(2A+B+k'(m-4))a_1-(B+k'(m-2))^2\cdot\left(1-\sum_{i=2}^na_ir_i\right)
\end{equation}
is locally primitively solvable
for any $k' \eq k \pmod{K(\mathbf a)}$
where $K(\mathbf a)=K(a_1,\cdots,a_n)$ is a constant which is dependent only on $a_1,\cdots,a_n$ and $T(\mathbf a)=T(a_1,\cdots,a_n)$ is a finite set of all odd primes $p$ for which there are at most four units of $\z_p$ in $\{a_1,\cdots,a_n\}$ by admitting a recursion.
\end{lem}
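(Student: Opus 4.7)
The plan is to analyze the local primitive solvability of (\ref{lem eq}) one prime at a time and then patch the local constraints via the Chinese Remainder Theorem. By the equivalences from Section~2, local representability of $A(m-2)+B$ by $\langle a_1,\ldots,a_n\rangle_m$ provides, at each prime $p$, some $k_p \in \mathbb{Z}_p$ together with an $\mathbf{x}_p \in \mathbb{Z}_p^n$ solving the system (\ref{main system}), and equivalently the rank-$(n-1)$ quadratic equation (\ref{Qaa}), over $\mathbb{Z}_p$. The goal is to select scaling exponents $s(p)$ for $p \in T(\mathbf{a}) \cup \{2\}$ and a residue class $k(A,B) \pmod{K(\mathbf{a})}$ so that equation (\ref{lem eq}), with $P = \prod p^{s(p)}$, admits a primitive local solution at every prime, uniformly over the class.

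At an odd $p \notin T(\mathbf{a})$, at least five of the $a_i$ are $p$-adic units by definition, so $Q_{a_1;\mathbf{a}}$ contains a rank-$(\ge 4)$ unimodular sublattice at $p$, and such a sublattice $\mathbb{Z}_p$-represents every element primitively. Thus $s(p)=0$ suffices, and Hensel-type continuity arguments keep the constraint on $k$ open at $p$, contributing a nontrivial modulus only at the finitely many primes dividing the relevant discriminants. For $p \in T(\mathbf{a}) \cup \{2\}$, primitivity can fail at $s(p)=0$; here we set $s(p)$ equal to the largest common $p$-adic valuation of the coordinates of a chosen local solution, and factoring out $p^{s(p)}$ produces a primitive local solution of (\ref{lem eq}). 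The bound $s(p) \le \tfrac{1}{2}\ord_p(4a_1)$ arises by tracking $p$-power divisibility of both sides of (\ref{Qaa}) after completing the square in $x_1$ (the very reduction that produces (\ref{Qaa}) from (\ref{main system})). The recursion clause in the definition of $T(\mathbf{a})$ reflects iterative refinement: after one rescaling, the residual equation may expose further primitivity conditions to be handled on a strictly smaller bad-prime set, and termination is ensured by the uniform bound on $s(p)$.

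Assembling the local constraints by CRT then produces the global modulus $K(\mathbf{a}) = \prod_p p^{e_p}$, a finite product, together with the required residue $k(A,B)$; uniformity over the residue class is inherited from the fact that sufficiently small $p$-adic perturbations of $k'$ preserve both the solvability and the primitivity of the local solutions constructed at each prime. The main obstacle is the analysis at the primes $p \in T(\mathbf{a}) \cup \{2\}$, and especially at $p=2$, where the scarcity of $p$-adic units among $a_1,\ldots,a_n$ blocks both direct Hensel lifting and universal-representation arguments; in that regime the coupling of the residue class of $k$, the scaling exponent $s(p)$, and the primitivity of (\ref{lem eq}) has to be unwound through a careful case analysis of the $p$-adic lattice structure of $Q_{a_1;\mathbf{a}}$, with the recursive nature of $T(\mathbf{a})$ making this analysis self-referential.
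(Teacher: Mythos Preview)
Your strategy---local analysis prime by prime, CRT patching of the resulting congruence conditions on $k$, and introducing the scaling $P=\prod p^{s(p)}$ at the bad primes to restore primitivity---is the correct framework and matches what the paper intends. The paper itself gives no self-contained argument here: it defers entirely to Propositions~3.2, 3.4, 3.6, 3.8, and~3.9 of \cite{rank 5}, which carry out precisely the local case analysis you outline.

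That said, your sketch stops short of a proof at the points that carry the weight. Two claims in particular need more care. First, the assertion that for odd $p\notin T(\mathbf a)$ the form $Q_{a_1;\mathbf a}$ automatically contains a rank~$\ge 4$ unimodular $\z_p$-sublattice does not follow just from five of the $a_i$ being $p$-adic units: the discriminant of $Q_{a_1;\mathbf a}$ is $a_1^{\,n-2}a_2\cdots a_n(a_1+\cdots+a_n)$, and the factor $a_1+\cdots+a_n$ (or $a_1$ itself) may well be divisible by $p$ regardless; one needs an actual argument about the Jordan splitting of $Q_{a_1;\mathbf a}$ over $\z_p$. Second, and more seriously, the heart of the lemma is the analysis at $p\in T(\mathbf a)\cup\{2\}$: showing that a \emph{single} residue class of $k$ modulo a bounded power of $p$ simultaneously guarantees solvability and primitivity of (\ref{lem eq}), with the specific bound $s(p)\le\tfrac12\ord_p(4a_1)$, and that this choice is stable under passing to any $k'\equiv k$. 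You acknowledge this as ``the main obstacle'' to be ``unwound through a careful case analysis,'' but that sentence describes the task rather than performs it. That case analysis---including the recursive refinement encoded in the definition of $T(\mathbf a)$---is exactly the content of the cited propositions in \cite{rank 5}, and without it what you have written is a correct roadmap rather than a proof.
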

\begin{proof}
One may use Propositions 3.2, 3.4, 3.6, 3.8, and 3.9 in \cite{rank 5} to prove this lemma.
\end{proof}

\vskip 0.8em

Now, we are already ready to prove our first main goal.

\vskip 0.8em

\begin{proof}[proof of Theorem \ref{main thm}]
Without loss of generality, we assume that $a_1 \le \cdots \le a_n$.
By Theorem 4.9 (2) in \cite{CO}, we may obtain a constant $C_{a_1;\mathbf a}>0$ for which
$$Q_{a_1;\mathbf a}(P\mathbf x-(B+k(m-2))\mathbf r)=C_{a_1;\mathbf a}$$
has an integer solution $\mathbf x \in \z^{n-1}$ with $n-1 \ge 4$ where $P|\prod \limits_{p \in T(\mathbf a)\cup\{2\}}p^{\frac{1}{2}\ord_p(4a_1)}$
provided that
$$\begin{cases}
Q_{a_1;\mathbf a}(P\mathbf x-(B+k(m-2))\mathbf r)=N \text{ is primitively locally solvable} \\
N>C_{a_1;\mathbf a} \text{ is sufficiently large}.
\end{cases}$$
Note that such a constant $C_{a_1;\mathbf a}$ is dependent only on $a_1,\cdots,a_n$.

Now for an integer $A(m-2)+B$ which is locally represented by $\left<a_1,\cdots,a_n\right>_m$, our attention go to $k \in \N_0$ satisfying
\begin{equation} \label{k}
\begin{cases}
k\eq k(A,B) \pmod{K(\mathbf a)} \\
(2A+B+k(m-4))a_1-(B+k(m-2))^2\cdot\left(1-\sum_{i=2}^na_ir_i\right)>C_{a_1;\mathbf a} \\
\sqrt{a_n}\sqrt{2A+B+k(m-4)} \le B+k(m-2)
\end{cases}    
\end{equation}
where $k(A,B)$ is a residue in $\z/K(\mathbf a) \z$ in Lemma \ref{3 lem}.
The second inequality of \eqref{k} draw
$$\alpha_{A,B;m}^-<k<\alpha_{A,B;m}^+$$
where $r_i=\frac{1}{a_1+\cdots+a_n}$ and  $2(m-2)^2\alpha_{A,B;m}^{\pm}:= \left(\sum \limits_{i=1}^na_i\right)(m-4)-2B(m-2) \pm$ $ \sqrt{\left\{\left(\sum \limits_{i=1}^na_i\right)(m-4)-2B(m-2)\right\}^2+4(m-2)^2\left\{\left(\sum \limits_{i=1}^na_i\right)(2A+B)-B^2-\frac{C_{a_1;\mathbf a}\sum \limits_{i=1}^na_i}{a_1}\right\}}$.
And the third inequality of \eqref{lem eq} draw
$$k<\beta_{A,B;m}^- \ \text{ or } \ \beta_{A,B;m}^+<k$$
where 
$\beta_{A,B;m}^{\pm}:=\frac{a_n(m-4)-2B(m-2)\pm\sqrt{\{a_n(m-4)-2B(m-2)\}^2+4(m-2)^2\{a_n(2A+B)-B^2\}}}{2(m-2)^2}$.
Therefore if $$\beta_{A,B;m}^++K(\mathbf a)<\alpha_{A,B;m}^+,$$ then we may get a $k \in \N_0$ satisfying the three conditions in \eqref{k}.
Through elementary but dirty calculation, one may obtain
a constant $N(a_1,\cdots,a_n)>0$ which is dependent only on $a_1,\cdots,a_n$
satisfying that
$$\beta_{A,B;m}^++K(\mathbf a)<\alpha_{A,B;m}^+$$
with $0 \le B \le m-3$ holds for any $A \ge N(a_1,\cdots,a_n) \cdot (m-2)^2$ and $m \ge 3$.

Consequently, we may conclude that for an integer $A(m-2)+B$ which is locally represented by $\left<a_1,\cdots,a_n\right>_m$ with 
$$\begin{cases}
A \ge N(a_1,\cdots,a_n) \cdot(m-2)^2\\
0 \le B \le m-3,
\end{cases}$$
we may take $k \in \N_0$ satisfying \eqref{k}.
Then from the first and second conditions of \eqref{k}, we obtain that $A(m-2)+B$ is represented by $\left<a_1,\cdots,a_n\right>_m$ over $\z$, i.e., there is $\mathbf x \in \z^n$ for which $$A(m-2)+B=a_1P_m(x_1)+\cdots+a_nP_m(x_n)$$ holds, more precisely, the diophantine system
$$
\begin{cases}
    a_1x_1^2+\cdots+a_nx_n^2=2A+B+k(m-4)\\
    a_1x_1+\cdots+a_nx_n=B+k(m-2)\\
\end{cases}
$$
holds.
And from the third condition of \eqref{k}, we obtain that the $\mathbf x $ is indeed in $\N_0^n$ by Proposition \ref{prop 1}.
This completes the first argument.

The Cauchy-Schwarz inequality gives that 
$$(a_1+\cdots+a_n)(a_1x_1^2+\cdots+a_nx_n^2) \le (a_1x_1+\cdots+a_nx_n)^2.$$
Which induces that for a given $(a_1,\cdots,a_n) \in \N^n$, the cubic on $m$ is optimal in this argument.
\end{proof}

\vskip 0.8em

\begin{rmk}
Note that the local representability of $m$-gonal form over $\N_0$ is coincide with the local representability of $m$-gonal form over $\z$.
The global representability of $m$-gonal form over $\N_0$ obviously implies the global representability of $m$-gonal form over $\z$, but converse does not hold.
In \cite{non alm reg}, the author considered infinitely many $m$-gonal forms of rank $4$ which is not almost regular over $\z$.
So we may conclude that there are also infinitely many $m$-gonal form of rank $4$ which is not almost regular over $\N_0$.
So the rank $\ge 5$ in Theorem \ref{main thm} is optimal.
\end{rmk}

\section{Finiteness theorem for universal $m$-gonal forms
(original polygonal number version)}

We start this section by recalling the Bhargava's escalaltor tree.
We call the smallest positive integer which is not represented by a non-universal $m$-gonal form $\left<a_1,\cdots,a_n\right>_m$ over $\N_0$ as the {\it the truant of $\left<a_1,\cdots,a_n\right>_m$}.
The {\it escalator tree of $m$-gonal forms over $\N_0$} is one sided tree with root $\emptyset_m$.
Conventionally, the truant of the $m$-gonal form $\emptyset_m$ is considered as the smallest positive integer $1$.
If an $m$-gonal form $\left<a_1,\cdots,a_n\right>_m$ of a node is not universal (i.e., has the truant), then we connect the node with the nodes $\left<a_1,\cdots,a_{n+1}\right>_m$ which are escalated superforms of $\left<a_1,\cdots,a_n \right>_m$ to represent the truant of $\left<a_1,\cdots,a_n \right>_m$.
Note that by the construction, the rank of every $m$-gonal form of depth $n$ of the escalator tree is $n$.
In order to avoid the appearance of the same forms, we set assumption $a_1\le \cdots \le a_n \le a_{n+1}$.
If an $m$-gonal form on a node is universal (i.e., does not have truant), then the node become a leaf of the tree.

So, universal forms of the tree appear only on the leaves and the universal $m$-gonal forms on the leaves would be kind of proper universal forms.
And one may notice that a univesal $m$-gonal form would contains at least one (proper universal) $m$-gonal form on a leaf of the escalator tree of $m$-gonal form as its subform.

Once one completes the escalator tree of $m$-gonal forms over $\N_0$, then one should get the $\gamma_{m;\N_0}$ as the largest truant of the tree.

Since the smallest positive $m$-gonal number (over $\N_0$) is $m$ except $1$, if $a_1+\cdots+a_n<m-1$, then the $m$-gonal form $\left<a_1,\cdots,a_n\right>_m$ would not represent $a_1+\cdots+a_n+1$ (more precisely, all the integers between $a_1+\cdots+a_n+1$ and $m-1$).
So for an $m$-gonal form $\left<a_1,\cdots,a_n\right>_m$ on a node of the escalator tree, in the case that $a_1+\cdots+a_n<m-1$, its truant would be $a_1+\cdots+a_n+1$ and so its children would be forms of
$$\left<a_1,\cdots,a_n,a_{n+1}\right>_m$$
where $a_n \le a_{n+1} \le a_1+\cdots+a_n+1$.
Which gives a special observation that for a fixed depth $d$, the depth $d$ of the escalator tree of $m$-gonal form take on the same forms for all sufficiently large $m$.
Especially, as an example, when $m \ge 8$, the escalator tree of $m$-gonal forms over $\N_0$ up to $3$ depth would appear as following :

$$\begin{tikzpicture}
\tikzstyle{level 1}=[sibling distance=80mm] 
\tikzstyle{level 2}=[sibling distance=60mm] 
\tikzstyle{level 3}=[sibling distance=20mm] 
  \node {$\emptyset_m$}
    child {node {$\left<1\right>_m$}
     child {node {$\left<1,1\right>_m$}
       child {node {$\left<1,1,1\right>_m$}
             child {node {$\vdots$}}       }       
       child {node {$\left<1,1,2\right>_m$}
             child {node {$\vdots$}}       }       
       child {node {$\left<1,1,3\right>_m$}
             child {node {$\vdots$}}       }       
    }
     child {node {$\left<1,2\right>_m$}
       child {node {$\left<1,2,2\right>_m$}
             child {node {$\vdots$}}       }       
       child {node {$\left<1,2,3\right>_m$}
             child {node {$\vdots$}}       }       
       child {node {$\left<1,2,4\right>_m$}
             child {node {$\vdots$}}       }       
    }
    };

\end{tikzpicture}$$

\vskip 0.8em

Now we prove a finiteness theorem for univesal $m$-gonal forms over $\N_0$ based on the Bhargava's escalator tree idea.
Just before, we see a method to determine the local representability of $m$-gonal by adopting already well known local representability results of quadratic forms.

\vskip 0.8em

\begin{prop} \label{loc.rep}
Let $F_m(\mathbf x)=a_1P_m(x_1)+\cdots+a_nP_m(x_n)$ be a primitive $m$-gonal form.
\begin{itemize}
    \item [(1) ] When $p$ is an odd prime with $p|m-2$, $F_m(\mathbf x)$ is universal over $\z_p$.
    \item [(2) ] When $m \not\eq 0 \pmod 4$, $F_m(\mathbf x)$ is universal over $\z_2$.
   \item [(3) ] When $p$ is an odd prime with $(p,m-2)=1$, an integer $N$ is represented by $F_m(\mathbf x)$ over $\z_p$ if and only if the integer $8(m-2)N+(a_1+\cdots+a_n)(m-4)^2$ is represented by the diagonal quadratic form $\left<a_1,\cdots,a_n \right>$ over $\z_p$.
    \item [(4) ] When $m \eq 0 \pmod 4$, an integer $N$ is represented by $F_m(\mathbf x)$ over $\z_2$ if and only if the integer $\frac{m-2}{2}N+(a_1+\cdots+a_n)\left(\frac{m-4}{4}\right)^2$ is represented by the diagonal quadratic form $\left<a_1,\cdots,a_n \right>$ over $\z_2$.
\end{itemize}
\end{prop}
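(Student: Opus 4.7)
The key algebraic tool is the identity
$$8(m-2)\,F_m(\mathbf{x})+(m-4)^2\sum_{i=1}^n a_i=\sum_{i=1}^n a_i\bigl(2(m-2)x_i-(m-4)\bigr)^2,$$
obtained by completing the square in each $P_m(x_i)$. This rewrites the $F_m$-representation of $N$ as a $\left<a_1,\ldots,a_n\right>$-representation of $8(m-2)N+(m-4)^2\sum a_i$, once I control the image of the affine substitution $x_i\mapsto y_i:=2(m-2)x_i-(m-4)$ over $\z_p$. Parts (3) and (4) will follow by recognizing that this substitution is a bijection up to a scaling factor, while (1) and (2) reduce to showing $P_m\colon\z_p\to\z_p$ is surjective.

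For (3), $p$ odd with $\gcd(p,m-2)=1$ makes both $8(m-2)$ and $2(m-2)$ $p$-adic units, so $x_i\mapsto y_i$ is a bijection $\z_p\to\z_p$ and dividing the identity by the unit $8(m-2)$ gives the claim. For (4), $m\equiv 0\pmod 4$ gives $\ord_2(m-2)=1$ and $\ord_2(m-4)\ge 2$, so the image of $x_i\mapsto y_i$ is exactly $4\z_2$; rewriting $z_i:=y_i/4=\frac{m-2}{2}x_i-\frac{m-4}{4}$, whose linear coefficient $\frac{m-2}{2}$ is a 2-adic unit, gives a bijection $\z_2\to\z_2$, and dividing the identity by $16$ yields the claim.

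For (1) and (2), primitivity of $F_m$ furnishes some $a_i$ (WLOG $a_1$) that is a $p$-adic unit, and $F_m(x_1,0,\ldots,0)=a_1P_m(x_1)$ reduces universality of $F_m$ to surjectivity of $P_m\colon\z_p\to\z_p$. For (1), modulo the odd prime $p\mid m-2$ one has $F_m\equiv\sum a_ix_i\pmod p$ (since $(m-2)/2\equiv 0$ and $-(m-4)/2\equiv 1\pmod p$), so a mod-$p$ preimage of any $N$ exists, and $\partial F_m/\partial x_1\equiv a_1\pmod p$ is a unit, whence Hensel's lemma lifts. For (2), applying the core identity with $n=1,\ a_1=1$ gives $P_m(x)=c\iff y^2=D:=8(m-2)c+(m-4)^2$; a direct $\ord_2$-computation shows $D\equiv 1\pmod 8$ when $m$ is odd, and $\ord_2(D)=2$ with $D/4\equiv 1\pmod 8$ when $m\equiv 2\pmod 4$, so $D$ is always a $\z_2$-square.

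The main obstacle will be the remaining step in (2) for $m\equiv 2\pmod 4$: selecting the sign of $\sqrt{D}$ so that $y=\pm\sqrt{D}$ belongs to $-(m-4)+2(m-2)\z_2$, the image of the affine map, which is what ensures the recovered $x=(y+(m-4))/(2(m-2))$ lies in $\z_2$. Writing $v:=\sqrt{D/4}$ and $u:=(m-4)/2$ (both odd 2-adic units), the relation $v^2-u^2=2(m-2)c$ gives $\ord_2(v-u)+\ord_2(v+u)\ge 1+\ord_2(m-2)$, while $(v-u)+(v+u)=2v$ has $\ord_2$ exactly $1$; hence one of $v\pm u$ has $\ord_2\ge\ord_2(m-2)$, and the matching sign of $\sqrt{D}$ delivers the required $x\in\z_2$, completing the surjectivity of $P_m$ over $\z_2$ and thus the universality of $F_m$ in case (2).
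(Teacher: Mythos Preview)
Your proof is correct. The paper itself does not give a proof of this proposition; it simply cites Proposition~3.1 of \cite{rank 5}. Your argument is a direct and self-contained treatment via the completing-the-square identity
\[
8(m-2)P_m(x)+(m-4)^2=\bigl(2(m-2)x-(m-4)\bigr)^2,
\]
which is the natural (and almost certainly the same) route taken in the cited reference. Your handling of the four cases is accurate: in (3) and (4) the affine change of variable is a bijection on $\z_p$ after the appropriate rescaling, and in (1) and (2) you correctly reduce to surjectivity of $P_m$ on $\z_p$ using a coefficient $a_i\in\z_p^\times$ supplied by primitivity. The only point that merits a word of caution is that in case (2) with $m$ odd you should also note (as you implicitly do) that $y+(m-4)$ is automatically even, so either sign of $\sqrt{D}$ already yields $x\in\z_2$; you handled the more delicate sign choice for $m\equiv 2\pmod 4$ carefully and correctly via the factorization $(v-u)(v+u)=2(m-2)c$.
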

\begin{proof}
See Proposition 3.1 in \cite{rank 5}.
\end{proof}

\vskip 0.8em

\begin{rmk} \label{loc rep rmk}
By Proposition \ref{loc.rep}, when a diagonal quadratic form $\left<a_1,\cdots,a_n\right>$ is locally universal, the $m$-gonal forms
$$\left<a_1,\cdots,a_n\right>_m$$ whose coefficients coincide with the coefficients of the quadratic form $\left<a_1,\cdots,a_n\right>$ are also locally universal for all $m \ge 3$.
\end{rmk}

\vskip 0.8em

\begin{proof}[proof of Theorem \ref{main thm'}]
Note that for $m \ge 31$, the escalator tree of $m$-gonal forms over $\N_0$ up to $5$ depth would be independent on $m$ and
all the candidates for the coefficients of $m$-gonal forms of depth $5$ would be the tuples in the finite set
$$T_{d=5}:=\{(a_1,\cdots,a_5) \in \N^5|a_1=1, a_i \le a_{i+1} \le a_1+\cdots+a_i+1\}.$$
One may check that the quadratic form $\left<a_1,\cdots,a_5\right>$ is locally universal for each $(a_1,\cdots,a_5) \in T_{d=5}$ case by case.
And then by Remark \ref{loc rep rmk}, we may obtain that all the $m$-gonal forms of depth $5$ on the nodes of depth $5$ of escalator tree of $m$-gonal forms are locally universal for all $m \ge 31$.

From Theorem \ref{main thm}, which implies that for all $m \ge 31$, the $m$-gonal forms $\left<a_1,\cdots,a_5\right>_m$ on the nodes of depth $5$ of escalator tree of $m$-gonal forms represents every sufficiently large integer 
$$N\ge C_{\ge31}(m-2)^3$$
where $C_{\ge31}:=\max\{N(a_1,\cdots,a_5)|(a_1,\cdots,a_5)\in T_{d=5}\}$.
Which says that  the truant of the escalator trees of $m$-gonal forms over $\N_0$ could not be bigger than $C_{\ge 31}(m-2)^3$ for all $m \ge 31$!,
giving that $$\gamma_{m;\N_0} \le C_{\ge 31}(m-2)^3$$ for all $m \ge 31$.

For $3 \le m \le 30$ too, the local-to-global theorem (Theorem \ref{main thm}) may direcly imply the existence of $\gamma_{m;\N_0}$.
Consequently, we conclude that for 
$$C:=\max \left\{\frac{\gamma_{m;\N_0}}{(m-2)^3}|3 \le m \le 30 \right\}\cup \{C_{\ge31}\},$$
the theorem holds.
\end{proof}


\begin{thebibliography}{abcd}


\bibitem {CO} W. K. Chan, B.-K. Oh, {\em Representations of integral quadratic polynomials},
Contemp. Math. 587 (2013), 31–46.

\bibitem {KL} B. Kane, J. Liu, {\em Universal sums of $m$-gonal numbers}, Int. Math. Res. Not. (2019)

\bibitem {KP'} B. M. Kim, D. Park, {\em A finiteness theorem for universal $m$-gonal forms}, preprint.

\bibitem {reg} B. M. Kim, D. Park, {\em Regular $m$-gonal forms}, preprint.

\bibitem {O1}  O. T. O'Meara, {\em The Integral representations of quadratic forms over local fields}, Amer. J. of Math, \textbf{80}(1958), 843-878.

\bibitem {O} O. T. O'Meara, {\em Introduction to quadratic forms}, Springer-Verlag, New York, (1963).

\bibitem {MS} X.-Z. Meng and Z.-W. Sun, {\em Sums of four polygonal numbers with coefficients}, Acta Arith.
180(2017), 229-249.

\bibitem {N1} M. B. Nathanson, {\em A short proof of Cauchy’s polygonal number theorem}, Proc. Amer.
Math. Soc. 99 (1987), 22–24.

\bibitem {N2} M. B. Nathanson, {\em Additive Number Theory: The Classical Bases}, Grad. Texts in
Math., Vol. 164, Springer, New York, (1996).


\bibitem {P} D. Park, {\em Universal m-gonal forms}, Ph.D. Thesis, Seoul National University. (2020).

\bibitem {det} D. Park, {\em Determining universality of $m$-gonal form with first five coefficients}, preprint.

\bibitem {rank 5} D. Park, {\em Representations of almost regular $m$-gonal forms $\rom2$
}, preprint.

\bibitem {non alm reg} D. Park, {\em Non-almost regular quaternary $m$-gonal forms}, preprint.


\end{thebibliography}
\end{document}